\numberwithin{equation}{section}
\theoremstyle{plain}
\newtheorem*{theorem*}{Theorem}
\newtheorem*{lemma*}{Lemma}
\newtheorem{lemma}{Lemma}
\theoremstyle{definition}
\theoremstyle{remark}
\newcommand\supp{\mathop{\rm supp}}
\newcommand\real{\mathop{\rm Re}}
\newcommand\imag{\mathop{\rm Im}}
\newcommand*{\defeq}{\mathrel{\vcenter{\baselineskip0.5ex \lineskiplimit0pt
                     \hbox{\scriptsize.}\hbox{\scriptsize.}}}%
                     =}
\author{Jacob Shapiro}
\title{Semiclassical resolvent bound for compactly supported $L^\infty$ potentials}
\begin{document}
\begin{abstract}
We give an elementary proof of a weighted resolvent estimate  for
semiclassical Schr\"odinger operators in dimension $n \ge 1$. We require the potential belong to $L^\infty(\mathbb{R}^n)$ and have compact support, but do not require that it have distributional derivatives in $L^\infty(\mathbb{R}^n)$. The weighted resolvent norm is bounded by $e^{Ch^{-4/3}\log(h^{-1})}$, where $h$ is the semiclassical parameter.
\end{abstract}
\maketitle 
\author
\section{Introduction}
Let $\Delta \le 0$ be the Laplacian on $\mathbb{R}^n$, $n \ge 1$. We consider semiclassical Schr\"odinger operators of the form
\begin{equation*}
P = P(h) \defeq -h^2 \Delta + V : L^2(\mathbb{R}^n) \to L^2(\mathbb{R}^n),\qquad h > 0,
\end{equation*}
where the potential $V \in L^\infty(\mathbb{R}^n)$ is real-valued and compactly supported. By the Kato-Rellich Theorem, the operator $P$ is self-adjoint with respect to the domain $H^2(\mathbb{R}^n)$. Therefore, the resolvent $(P - z)^{-1}$ is bounded $L^2(\mathbb{R}^n) \to L^2(\mathbb{R}^n)$ for all $z \in \mathbb{C} \setminus \mathbb{R}$. We establish an $h$-dependent bound on a weighted resolvent norm that is uniform up to the positive real spectrum.

\begin{theorem*} \label{bounded potentials}
Let $n \ge 1$, $V \in L^\infty_{\text{\emph{comp}}}(\mathbb{R}^n)$, and $[E_{\text{\emph{min}}}, E_{\text{\emph{max}}}] \subseteq (0, \infty)$. For any $s > 1/2$, there exist $C, h_0>0$  such that
\begin{equation}\label{exp bound} 
\left\| (1+|x|)^{-s} (P(h)  -i\varepsilon)^{-1} (1+|x|)^{-s}  \right\|_{L^2(\mathbb{R}^n) \to H^2(\mathbb{R}^n)}  \le e^{Ch^{-4/3}\log(h^{-1})}, 
\end{equation}
 for all $E \in [E_{\text{\emph{min}}}, E_{\text{\emph{max}}}]$, $0 < \varepsilon < 1 $, and $h \in (0,h_0]$.
\end{theorem*}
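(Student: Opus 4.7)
The theorem is a Carleman-estimate type bound, so my plan is to derive an exponentially weighted a priori estimate for $u$ satisfying $(P - E - i\varepsilon)u = f$ with $f$ supported in a ball. Since $V \in L^\infty$, elliptic regularity upgrades any $L^2$ bound on $(1+|x|)^{-s}u$ to the claimed $H^2$ bound, so I need only control the $L^2$ norm. Fix $R_0$ with $\mathrm{supp}\,V \subset B_{R_0}$. Outside $B_{R_0}$, $u$ satisfies the free semiclassical Helmholtz equation with compactly supported source $(1+|x|)^{-s}g$; standard free-resolvent estimates (e.g.\ Cardoso--Vodev) bound $\|(1+|x|)^{-s}u\|_{L^2(\mathbb{R}^n\setminus B_{R_0})}$ by $h^{-1}\|g\|$ plus a trace term on $\partial B_{R_0}$, reducing the problem to bounding $u$ on a fixed larger ball.

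The heart of the argument is a global Carleman estimate that can accommodate a rough potential. For a carefully chosen radial weight $\varphi(|x|)$, conjugating $P - E$ by $e^{\varphi/h}$, splitting the conjugated operator into its symmetric and skew parts $A + iB$, and extracting the positive commutator $i[A,B]$ yields an estimate of the shape
\[
h \int e^{2\varphi/h}|u|^2\,dx \;\lesssim\; \int e^{2\varphi/h}|(P-E)u|^2\,dx + \|V\|_\infty^2\int e^{2\varphi/h}|u|^2\,dx + (\text{boundary}).
\]
For $V \in W^{1,\infty}$ one would instead get a small factor $h\|\nabla V\|_\infty^2$ on the right; for $V$ merely in $L^\infty$, the $\|V\|_\infty^2$ term must be absorbed by designing $\varphi$ so that the commutator delivers a coercivity constant strictly larger than $\|V\|_\infty^2/h$, rather than the usual $h^{-1}$. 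A related route would be to work with a mollified potential $V_\delta = V \ast \chi_\delta$ and optimize $\delta$, but the scaling outcome is the same.

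Optimizing the radial profile of $\varphi$ subject to this coercivity constraint and the standard pseudoconvexity condition at the semiclassical energy $E$ gives $|\nabla\varphi| \sim h^{-1/3}$ and hence $\varphi_{\max} \sim h^{-1/3}$ on $B_{R_0}$, so the exponential factor contributes $e^{C\varphi_{\max}/h} = e^{Ch^{-4/3}}$ in the final estimate. The $\log(h^{-1})$ factor emerges when matching the interior Carleman estimate to the exterior free-resolvent bound across the transition region near $\partial B_{R_0}$, where logarithmic corrections in Hankel-function asymptotics (or equivalently a dyadic iteration across shells) produce a single $\log$ loss.

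The main obstacle is precisely the absence of derivatives on $V$: every appearance of $V$ in the commutator analysis must be controlled purely by its $L^\infty$ norm, with no integration-by-parts escape. This forces the coercivity constant on the left of the Carleman estimate to be $\gg h^{-1}$, which in turn forces large $|\nabla \varphi|$ and hence a large weight $\varphi$; the resulting trade-off optimizes at $|\nabla\varphi|\sim h^{-1/3}$, producing the exponent $4/3$ rather than the Lipschitz value $1$. Making this trade-off sharp while keeping the boundary/matching terms under control is what I expect to be the most delicate part of the proof.
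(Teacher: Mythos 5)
Your high-level strategy (a Carleman estimate whose weight is chosen to absorb $\|V\|_\infty$-terms that cannot be integrated by parts, leading to the exponent $4/3$) matches the spirit of the paper, and your observation that the $L^2\to H^2$ bound reduces by elliptic regularity and commutator arguments to an $L^2\to L^2$ bound is exactly the reduction carried out at the start of Section~4. However, the architecture and the mechanism for the $\log$ factor differ from the paper in ways that leave real gaps.

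First, the paper does not glue an interior Carleman estimate to an exterior free-resolvent bound across $\partial B_{R_0}$. Instead it proves a single \emph{global} Carleman estimate (Lemma~1) with a weight $\varphi$ defined on all of $(0,\infty)$: $\varphi'$ is of size $\sqrt{c}$ on $\supp V$, decays like $\tilde B/r$ on a transition region $R_0<r<R_1$ with $R_1\sim h^{-1/3}$, and vanishes for $r\ge R_1$. The only gluing in the paper is between the Carleman estimate and a translated copy of itself, done solely to remove the $|x|^{1/2}$ degeneracy at the origin. The spherical-energy functional $F(r)$ and the product rule for $wF$ play the role of your symmetric/antisymmetric splitting $A+iB$; the remainder $2w\mathop{\rm Re}\langle Vu,u'\rangle$, after Cauchy--Schwarz with parameter $\gamma=h^{2/3}$, is absorbed into the terms $4h^{2/3}w\varphi'\|u'\|_S^2$ and $h^{-2/3}(w\psi)'\|u\|_S^2$ using the lower bounds \eqref{lower bound c}--\eqref{lower bound tanh}. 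This is the concrete realization of your heuristic that the coercivity must beat $\|V\|_\infty$.

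Second, and more importantly, your account of the $\log(h^{-1})$ factor does not hold up. It does not come from Hankel-function asymptotics, dyadic shell iteration, or a matching loss at $\partial B_{R_0}$. It comes from the $h$-dependence of the support of the Carleman weight: $\psi\sim 1/r^2$ forces $\varphi'\le \tilde B/r$ on $(R_0,R_1)$, and since $R_1^2\sim c/(h^{2/3}E_{\min})$, one has $R_1\sim h^{-1/3}$, so
\begin{equation*}
\max\varphi \le \sqrt{c}\,R_0 + \tilde B\log(R_1/R_0) \le K\log(h^{-1}),
\end{equation*}
which with the conjugation $e^{\varphi/h^{4/3}}$ gives exactly $e^{Ch^{-4/3}\log(h^{-1})}$. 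In your normalization ($e^{\varphi/h}$, $|\nabla\varphi|\sim h^{-1/3}$) the effective weight derivative also scales like $h^{-4/3}$ near $\supp V$, but you would still need $\varphi$ to decay to zero over a region of width $\sim h^{-1/3}$ before the commutator coercivity can be maintained in the exterior, and integrating that decay produces $\varphi_{\max}\sim h^{-1/3}\log(h^{-1})$, not $h^{-1/3}$. Your proposal misses this, which is precisely the gap between the first draft's $e^{Ch^{-5/3}}$ bound and the improved $e^{Ch^{-4/3}\log(h^{-1})}$ that the paper attributes to a sharper estimate on $\varphi'$ in the transition region. If you want to pursue your interior/exterior gluing instead, you would need to show that the boundary terms at $\partial B_{R_0}$ can be controlled without a further $h$-dependent loss beyond the one already present in $\max\varphi$; that is not addressed in the proposal.
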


Exponential resolvent bounds are known to hold under a wide range of  geometric, regularity, and decay assumptions. In \cite{bu98}, Burq showed the resolvent is $O(e^{Ch^{-1}})$ for smooth, compactly supported perturbations of the Laplacian outside an obstacle. He later established the same bound for smooth, long-range perturbations \cite{bu02}. Cardoso and Vodev \cite{cavo} extended Burq's estimate in \cite{bu02} to infinite volume Riemannian manifolds which may contain cusps. 

In lower regularity, Datchev \cite{da14} and the author \cite{sh16} proved the weighted resolvent norm in \eqref{exp bound} is still $O(e^{Ch^{-1}})$, provided $V$ and $\nabla V$ belong to $L^\infty$ and have long-range decay. Vodev \cite{vod14} showed an $O(e^{Ch^{-\ell}})$ bound, $0 < \ell < 1$, for potentials that are H\"older continuous, $h$-dependent, and have decay depending on $\ell$. 

Since the completion of the first draft of this paper, the author has learned about the independent and parallel work of Klopp and Vogel \cite{klvo18}. They use a different Carleman estimate to show that, if the support of $V$ is contained in the ball $B(0,R) \defeq \{x \in \mathbb{R}^n : |x| < R \}$, and $\chi$ is a smooth cutoff supported near $B(0,R)$, then for any compact interval $I \subseteq \mathbb{R} \setminus \{0\}$, there exist constants $C>0$ and $h_0 \in (0,1]$ such that
\begin{equation} \label{Klopp-Vogel}
\|(-h^2 \Delta + V - \lambda^2)^{-1} v\|_{H^1(B(0,R))} \le e^{Ch^{-4/3}\log(h^{-1})}\|v\|_{L^2(B(0,R))},
\end{equation}
 for all $ h \in (0,h_0]$, $v \in L^2_{\text{comp}}(B(0,R))$, and $\lambda \in I$. 

The novety of \eqref{exp bound} and \eqref{Klopp-Vogel} is that they are the first explicit $h$-dependent weighted resolvent bounds in such low regularity. 
The only previous result for general $L^\infty$ potentials of which the author is aware is by Rodnianski and Tao \cite[Theorem 1.7]{rota15}. They consider short-range, $L^\infty$ potentials on asymptotically conic manifolds of dimension $n \ge 3$, and prove a non-semiclassical version of \eqref{exp bound} in which the right side is replaced by an unspecified function of $h$.

A related result for compactly supported $L^\infty$ potentials in dimension one is \cite[Theorem 2.29]{dyzw}. It says that, given $V \in L^\infty_{\text{comp}}(\mathbb{R})$ and $[E_{\text{min}}, E_{\text{max}}] \subseteq (0,\infty)$, there exists a constant $c > 0$ such that the meromorphic continuation of the cutoff resolvent 
\begin{equation*} \label{semiclassical cutoff resolv}
\chi(-h^2 \partial_x + V - z)^{-1} \chi, \qquad \chi \in C^\infty_0(\mathbb{R}),
\end{equation*}
from $\imag z > 0$, $\real z > 0$ to $\imag z \le 0$, $\real z > 0$  has the property 
\begin{equation} \label{dim one exp region}
\text{$z$ is a pole, or \textit{resonance}, of the continuation, $\real z \in [E_{\text{min}}, E_{\text{max}}] \implies \imag z \ge - e^{-ch^{-1}}$}.
\end{equation}
See section \cite[Section 2.8]{dyzw} for further details. Resonance free regions are closely related to resolvent bounds, see, for instance,\cite[Theorem 1.5]{vod14}, and \cite[Theorem 2.8]{dyzw}, \cite[Theorem 3]{klvo18}, so \eqref{dim one exp region} strongly suggests that, in dimension one, the right side of \eqref{exp bound} can be improved to $e^{Ch^{-1}}$.

The $O(e^{Ch^{-1}})$ bound appearing in \cite{bu98, bu02, da14,sh16, cavo} is well-known to be optimal. See for instance, \cite{ddz15} and the references cited there. However, it is still an open problem to determine the optimal resolvent bound for $V \in L^\infty$.

Resolvent bounds such as \eqref{exp bound} imply local energy decay for the wave equation
\begin{equation} \label{wave equation}
\begin{cases}
(\partial_t^2 - c^2(x)\Delta) u(x,t) = 0, & (x,t) \in \left(\mathbb{R}^n \setminus \Omega \right) \times (0, \infty), \\
 u(x,0) = u_0,\\
 \partial_t u(x,0) = u_1(x), \\
 u(t,x) = 0, & (x,t) \in  \partial \Omega \times (0,\infty),
\end{cases}
\end{equation}
where $\Omega$ is a compact (possibly empty) obstacle with smooth boundary. 

Burq used his $O(e^{Ch^{-1}})$ resolvent bounds to show that, if $c$ is smooth and decaying sufficiently quickly to unity at infinity, then for any compact $K \subseteq \mathbb{R}^n \setminus \Omega$ and any compactly supported initial data, the local energy $E_K(t)$ of the solution to \eqref{wave equation} decays like
\begin{equation} \label{local energy brief}
E_K(t) \le \frac{C_{K, u_0, u_1}}{\log(2 + t)},  \qquad t \ge 0.
\end{equation}
See \cite[Theorem 1]{bu98} and \cite[Theorem 2]{bu02}. 

Logarithmic decays also hold in lower regularity. In \cite{sh17}, the author showed that \eqref{local energy brief} holds if $\Omega = \emptyset$, $n \ge 2$, and $c$ is Lipschitz perturbation of unity.  

Adapting the methods from these articles, one expects to find that if $c$ is a sufficiently decaying $L^\infty$-perturbation of unity, then
\begin{gather}
\text{$O(e^{Ch^{-\ell}})$ resolvent bound, $\ell \ge 1 $, implies} \nonumber \\ E_K(t) \le \frac{C_{K, u_0, u_1}}{\log^{1/\ell}(2 + t)}, \qquad t \ge 0. \label{infinity wave decay}
\end{gather}
In fact, the author has been informed by Georgi Vodev that \eqref{infinity wave decay} also follows by adapting the methods of \cite{cavo04} in a straightforward manner.  

Stronger resolvent bounds are known when $V$ is more regular and additional assumptions are made about the Hamilton flow $\Phi(t) = \text{exp}t(2 \xi \cdot \nabla_x - \nabla_x V \cdot \nabla_{\xi}) $. For example, if $V \in C^\infty_0(\mathbb{R}^n)$ and is  \textit{nontrapping} at the energy $E$, then it is well-known that \eqref{exp bound} improves to
\begin{equation*}
\left\|(1+|x|)^{-s} (P(h) - E -i\varepsilon)^{-1} (1+|x|)^{-s} \right\|_{L^2(\mathbb R^n) \to L^2(\mathbb R^n)} \le C/h.
\end{equation*}

Nontrapping resolvent estimates have application to Strichartz and local smoothing estimates \cite{bt07,mmt08}, resonance counting \cite{chr15}, and integrated local energy decay \cite{rota15}. For more about resolvent bounds under various dynamical assumptions, see chapter 6 from \cite{dyzw}, and the references therein. Note that, in our case, $\Phi(t)$ may be undefined, since $V$ may not be differentiable.

Let $\mathbf{1}_{\le 1}$ be the characteristic function of $\{ x \in \mathbb{R}^n : |x| \le 1 \}$, and define $\mathbf{1}_{\ge 1}$ similarly. The key to proving the Theorem is to establish the following global Carleman estimate. 
\begin{lemma}[Carleman estimate] \label{Carleman lemma}
Let $R_0 > 3$ so that $\supp V \subseteq B(0,R_0/2)$. There exist $K, C > 0$, $h_0 \in (0, 1]$, and $\varphi = \varphi_h \in C^2(0, \infty)$ depending on $E_{\text{\emph{min}}}$, $E_{\text{\emph{max}}}$, $\|V \|_\infty$, $R_0$, $n$, and $s$  such that
\begin{equation} \label{varphi h bound in lemma}
 \max \varphi =  K \log(h^{-1}), \qquad h \in (0, h_0],
\end{equation}
and
\begin{equation} \label{Carleman est}
\begin{split}
\Big\|\Big(\mathbf{1}_{\le 1}|x|^{1/2} + \mathbf{1}_{\ge 1} (1+|x|)^{-s} \Big) &e^{\varphi/h^{4/3}} v \Big\|^2_{L^2(\mathbb{R}^n)} \le \\
 \frac{C}{h^{10/3}}  \Big\|(1+|x|)^{s}&e^{\varphi/h^{4/3}} (P(h) - i\varepsilon)v \Big\|^2_{L^2(\mathbb{R}^n)} 
+  \frac{C\varepsilon}{h^{10/3}} \Big\| e^{\varphi/h^{4/3}}v \Big\|^2_{L^2(\mathbb{R}^n)},
\end{split}
\end{equation}
for all $E \in [E_\text{\emph{min}}, E_\text{\emph{max}} ]$, $\varepsilon > 0$, $h \in (0,h_0]$, and  $v \in C_0^\infty(\mathbb{R}^n)$.
\end{lemma}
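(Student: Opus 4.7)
The plan is to prove \eqref{Carleman est} by the positive-commutator method, with a Carleman weight $\varphi$ engineered so that the potential $V$ is never differentiated. Setting $u \defeq e^{\varphi/h^{4/3}}v$, conjugation yields
\[
P_\varphi \defeq e^{\varphi/h^{4/3}}P(h)e^{-\varphi/h^{4/3}} = -h^2\Delta + V + h^{2/3}\Delta\varphi - h^{-2/3}|\nabla\varphi|^2 + 2h^{2/3}\nabla\varphi\cdot\nabla.
\]
I would split $P_\varphi = A + iB$ into its formally self-adjoint and anti-self-adjoint parts,
\[
A = -h^2\Delta + V - h^{-2/3}|\nabla\varphi|^2, \qquad iB = h^{2/3}(2\nabla\varphi\cdot\nabla + \Delta\varphi),
\]
and use the identity
\[
\|(P_\varphi - i\varepsilon)u\|^2 = \|Au\|^2 + \|(B-\varepsilon)u\|^2 + \langle i[A,B]u, u\rangle
\]
to reduce the proof to a lower bound on $\langle i[A,B]u,u\rangle$ by the weighted $L^2$ norm on the left of \eqref{Carleman est}; the cross term with $-i\varepsilon$ is absorbed into the $\frac{C\varepsilon}{h^{10/3}}\|e^{\varphi/h^{4/3}}v\|^2$ remainder.

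The essential new ingredient is the design of a radial $C^2$ weight $\varphi = \varphi(r)$ meeting four conditions at once: (i) $\varphi' \equiv 0$ on a neighborhood of $\supp V \subseteq B(0,R_0/2)$, so that every term involving both $V$ and a derivative of $\varphi$ drops out of the calculation and no derivative of $V$ is ever required (note that the naive commutator term $i[V,B] = -2h^{2/3}(\nabla\varphi\cdot\nabla V)$ would otherwise obstruct the proof); (ii) $\varphi$ strictly convex on an intermediate annulus, where the commutator contribution proportional to $\varphi''(\varphi')^2$ provides positivity; (iii) $\max\varphi = K\log(h^{-1})$, attained at some radius $R_1 = R_1(h)$; (iv) beyond $R_1$, $\varphi'$ decays compatibly with the polynomial weight $(1+|x|)^{-s}$ at infinity. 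A natural template is a piecewise profile with $\varphi\equiv 0$ on $[0,R_0]$, strict convex growth on $[R_0,R_1]$ up to $K\log(h^{-1})$, and a logarithmic tail on $[R_1,\infty)$, smoothed to $C^2$. The factor $\mathbf{1}_{\le 1}|x|^{1/2}$ on the left-hand side is obtained by an additional Hardy-type integration by parts in the inner ball, using the radial form of $-h^2\Delta$ inside $A$.

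Writing $A_0 \defeq A - V$, the commutator $\langle i[A_0,B]u,u\rangle$ is computed in polar coordinates, producing leading positive terms of size $h^{-2/3}\varphi''(\varphi')^2$ and $h^{2/3}(\varphi')^2/r$ (the latter from the radial Jacobian $r^{n-1}$), along with lower-order remainders involving $h^2\Delta^2\varphi$ and boundary fluxes that are absorbed once $h$ is small. The prefactor $h^{-10/3}$ on the right of \eqref{Carleman est} arises from dividing through by the coefficient of this positive commutator and accounting for the $h^{-4/3}$ in the exponent of the weight. The main obstacle is the simultaneous design of $\varphi$: its gradient must vanish on $\supp V$, yet the convex annulus must be placed so that the resulting positive commutator dominates both the remainder terms and the quantity $h^{-2/3}|\nabla\varphi|^2$, which enters $A$ with the wrong sign. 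The exponent $4/3$ in $e^{\varphi/h^{4/3}}$ is precisely the scaling that makes this balance work for general $V \in L^\infty$; any additional regularity on $V$ would allow $\nabla V$ to enter the commutator and lower the exponent toward $1$.
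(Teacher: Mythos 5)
Your overall framework (conjugate, split $P_\varphi$ into self-adjoint and anti-self-adjoint parts, get positivity from a commutator-type quantity) is in the same spirit as the paper, which works with a spherical energy functional $F$ and differentiates $wF$. However, your specific design choice for $\varphi$ — requiring $\varphi'\equiv 0$ on a neighborhood of $\supp V$ — is the opposite of what the paper does and, as far as I can see, cannot work. With $\varphi'\equiv 0$ near $\supp V$, the anti-self-adjoint part $B$ is identically zero there, so $i[A,B]$ contributes no positivity on $\supp V$. But $\supp V$ is precisely the region where you most need lower-bound control on $u$: the ultimate aim is an estimate of the form $\|b\,u\|^2\lesssim\|\text{weight}\cdot P_\varphi u\|^2+\varepsilon\|u\|^2$ with $b>0$ on all of $\mathbb{R}^n$ (in particular on $\supp V$), because that is what the resolvent argument in Section \ref{proof of resolvent bound section} needs. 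An estimate that only controls $u$ on an annulus away from $\supp V$ cannot be bootstrapped to control $u$ on $\supp V$, since you have no boundary data and $-h^2\Delta+V-E$ has no favorable sign near a resonance.

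The paper does the opposite. It chooses $c$ large enough that $\varphi'\ge\sqrt{c}\tanh(\sqrt{c}/2)$ on $(0,R_0/2)\supseteq\supp V$, with $c$ satisfying \eqref{lower bound c} and \eqref{lower bound tanh} so that $\varphi'$ dominates $\|V\|_\infty$. The point is that the term involving $V$ is not a commutator $[V,B]$: the paper omits $V$ from $F$, differentiates $wF$, and then \emph{adds and subtracts} the cross term $2w\,\real\langle Vu,u'\rangle_S$ to reconstruct $P_\varphi u$. That cross term involves no derivative of $V$, so $V\in L^\infty$ suffices. It is then bounded by Young's inequality with $\gamma=h^{2/3}$, producing the two quantities $\gamma\|V\|_\infty\mathbf{1}_{B(0,R_0/2)}w\|u'\|^2_S$ and $\gamma^{-1}\|V\|_\infty\mathbf{1}_{B(0,R_0/2)}w\|u\|^2_S$. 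The first is absorbed by $4h^{2/3}w\varphi'\|u'\|_S^2$ (this is exactly why $4\varphi'\ge\|V\|_\infty$ on $\supp V$ is needed, i.e.\ \eqref{lower bound tanh}), and the second by $h^{-2/3}(w\psi)'\|u\|_S^2$ with $\psi=c$ on $(0,R_0]$ (this is where \eqref{lower bound c} enters). If you instead force $\varphi'\equiv0$ on $\supp V$, the first of these two absorptions is impossible for any $\gamma>0$, and taking $\gamma\to 0$ blows up the second. Incidentally, your worry that $i[V,B]$ forces differentiation of $V$ is itself avoidable even in the commutator formalism: integrating by parts in $\langle i[V,B]u,u\rangle$ moves the derivative off $V$ onto $\varphi$ and $u$, producing exactly the kind of cross term the paper handles.

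Finally, your description of the weight (``$\varphi\equiv0$ on $[0,R_0]$, convex growth on $[R_0,R_1]$, logarithmic tail on $[R_1,\infty)$'') is incompatible with \eqref{varphi h bound in lemma}: an unbounded logarithmic tail would give $\max\varphi=\infty$. The paper instead takes $\varphi'\equiv0$ for $r\ge R_1$ with $R_1\sim h^{-1/3}$, and the $\log(h^{-1})$ in $\max\varphi$ arises precisely from $\int_{R_0}^{R_1}\varphi'\,dr\lesssim\log(R_1/R_0)\sim\log(h^{-1})$ using the $1/r$ decay of $\varphi'$ in the intermediate region (see \eqref{remove power get log}). Your construction would need to be reworked to respect both the large-$\varphi'$ requirement on $\supp V$ and the $\log(h^{-1})$ ceiling.
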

The key properties of the Carleman weight $\varphi = \varphi_h$ are that $\partial_r \varphi$ is large on $\supp V$ and that $ \max \varphi =  K\log(h^{-1})$, where $K > 0$ depends on $E_{\text{min}}$, $\| V\|_\infty$, and $\supp V$, but not on $h$. We construct $\varphi$ to have these properties in Lemma \ref{phi lemma}.

To prove Lemma \ref{Carleman lemma}, we adapt the strategy appearing in \cite{cavo, da14, rota15, sh16}. The common starting point is a certain spherical energy functional $F: (0, \infty) \to \mathbb{R}$ that includes $\varphi$, see \eqref{defn F}. Typically, $F$ also  includes $V$. However, we intend to differentiate the product $wF$, where $w: (0, \infty) \to \mathbb{R}$ is a second weight function defined by \eqref{defn w}. Since we cannot differentiate $V$ in our case, we initially leave $V$ out of $F$, but add it back after differentiation. By doing so, we recover the terms needed to prove \eqref{Carleman est}, at the cost of introducing a remainder term that may be large on the support of $V$, which we must control. We control the remainder with two innovations that go beyond the techniques used in \cite{cavo, da14, rota15, sh16}. First we increase the $h$-dependence of the exponent in \eqref{Carleman est} to $h^{-4/3}$. This differs from the Carleman estimates in the previous works, which use a factor of the form  $e^{\varphi h^{-1}}$. Second, we require that $\partial_r \varphi \ge c$ on $\supp V$, where $c$ is chosen large enough to satisfy \eqref{lower bound c} and \eqref{lower bound tanh}.

The outline of the paper is as follows. In Section \ref{notation}, we construct the weights $w$  and $\varphi$ and prove their key properties. In Section \ref{Carleman section}, we prove the Carleman estimate. In Section \ref{proof of resolvent bound section}, we first glue two versions of the Carleman estimate togther to remove the loss at the origin. Then we prove the Theorem via a density argument. The density argument is straightforward and closely follows proofs in \cite{da14, sh16, dyzw}, but we recall it for the reader's convenience.

\textbf{Acknowledgements:} A previous version of this paper asserted only that $\max \varphi \le Kh^{-1/3}$, resulting in an larger $h^{-5/3}$ exponent on the right side of \eqref{exp bound}. Not until seeing the estimate \eqref{Klopp-Vogel} of Klopp and Vogel did the author realize that, without changing the construction, $\varphi'$ could be estimated more sharply outside the support of $V$ (see \eqref{remove power get log}). As a result, the author was able to improve the exponent in \eqref{exp bound} from $h^{-5/3}$  to $h^{-4/3}\log(h^{-1})$ where it currently stands. The author is grateful to Klopp and Vogel for helping to bring about this improvement.

The author is also thankful to Georgi Vodev, who provided the initial idea for this project, and to Kiril Datchev, Jeffrey Galkowski, and Michael Hitrik for helpful discussions and suggestions. Finally, the author was supported by the Bilsland Dissertation Fellowship from the Purdue University College of Science during the writing of this paper. 

\section{Notation and construction of the Carleman weight} \label{notation}
In this section, we establish notation, construct the weight functions $w$ and $\varphi$, and prove elementary estimates needed for the proof of Lemma \ref{Carleman lemma}.

Throughout the paper, we use prime notation to denote differentiation with respect to the radial variable $r \defeq |x|$, $x \in \mathbb{R}^n$. For instance, $u':=\partial_r u$. 

Put
\begin{equation*}
\delta \defeq 2s -1. 
\end{equation*}
Without loss of generality, we assume $0 < \delta < 1$. Fix $R_0 > 3$ large enough so that
\begin{equation*}
\supp V \subseteq B(0,R_0/2).
\end{equation*}

Next, choose $c > 1$ large enough so that 
\begin{gather}
c > \|V\|_\infty R_0 /4, \label{lower bound c} \\  
\sqrt{c} \tanh(\sqrt{c}/2) > \max\{\|V\|_\infty /4, 1\}. \label{lower bound tanh}
\end{gather}
Set
\begin{equation} \label{defn psi}
\psi = \psi_{h}(r) \defeq \begin{cases}  c & 0< r \le R_0, \\
\frac{B}{r^2} - \frac{h^{2/3}E_{\text{min}}}{4} & R_0 < r < R_1, \\
0 & r \ge R_1,
\end{cases}
\end{equation}
where we put 
\begin{equation} \label{defn B and R_1}
\begin{split}
 B& = B(h) \defeq \left( c + \frac{h^{2/3}E_{\text{min}}}{4} \right)R^2_0,  \\ R^2_1 = R^2_1&(h) \defeq \frac{4B}{h^{2/3}E_{\text{min}}} = \left(1 + \frac{4c}{h^{2/3}E_{\text{min}}}\right)R^2_0 ,
\end{split}
\end{equation}
so that $\psi$ is continuous. In Lemma \ref{phi lemma}, we will construct the Carleman weight $\varphi$ so that $(\varphi')^2$ is approximately equal to $\psi$  for $h$ small. From this relationship, we will deduce the properties of  $\varphi$ needed to prove the Carleman estimate.  

\begin{figure}
\centering
\labellist
\large
\pinlabel $R_0$  at 190 65
\pinlabel $c$  at 36 275
\pinlabel $R_1$ at 410 65
\pinlabel $\psi(r)$ at 75 310
\pinlabel $r$ at 480 65
\endlabellist
\includegraphics[width=10cm, height=6cm]{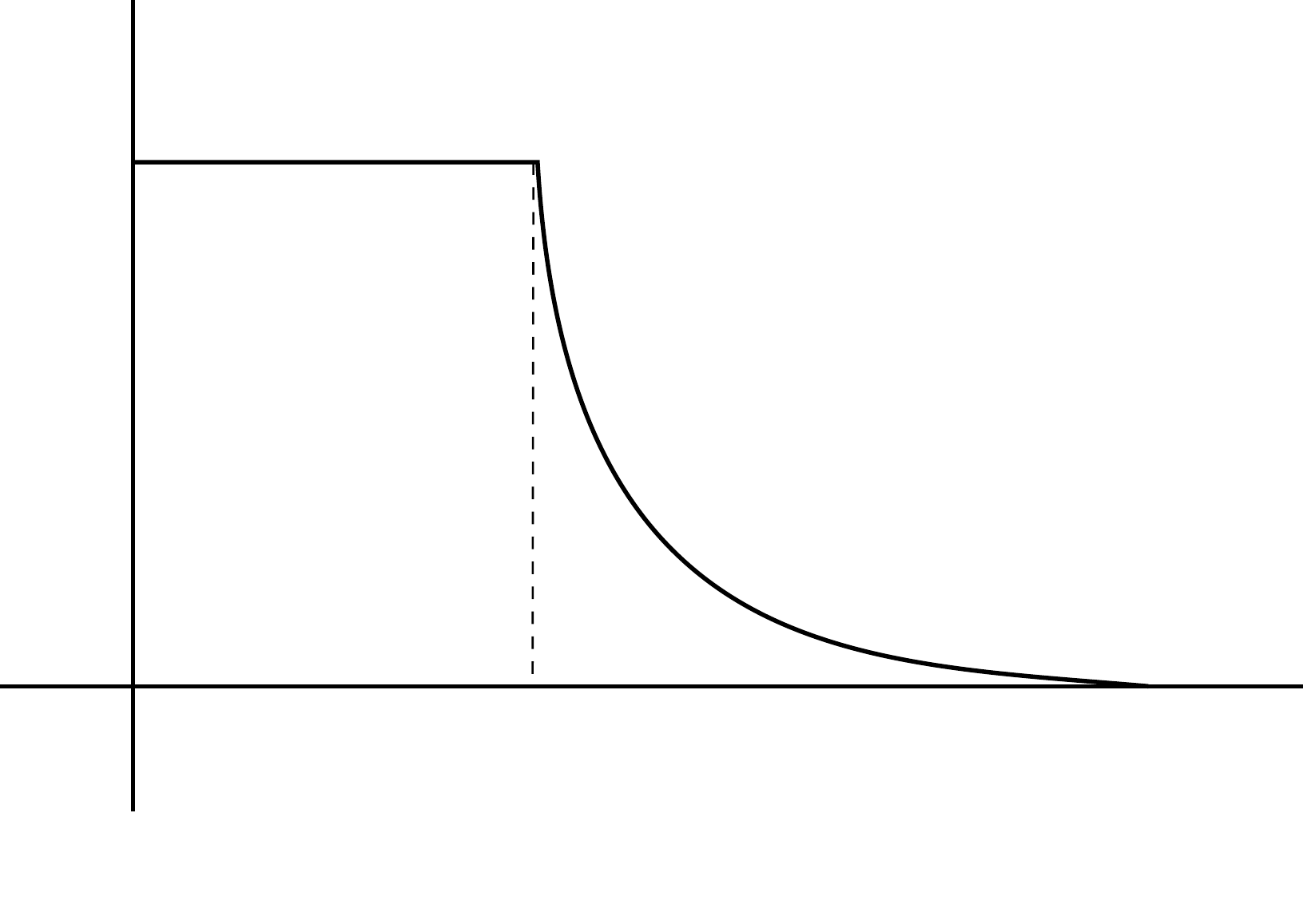}
 \caption{The graph of $\psi$.}
\end{figure}

\begin{figure}
\centering
\labellist
\large
\pinlabel $R_1$  at 233 65
\pinlabel $R^2_1$  at 12 254
\pinlabel $R^2_1+1$  at 0 296
\pinlabel $w(r)$ at 80 320
\pinlabel $r$ at 395 65
\endlabellist
\includegraphics[width=10cm, height=6cm]{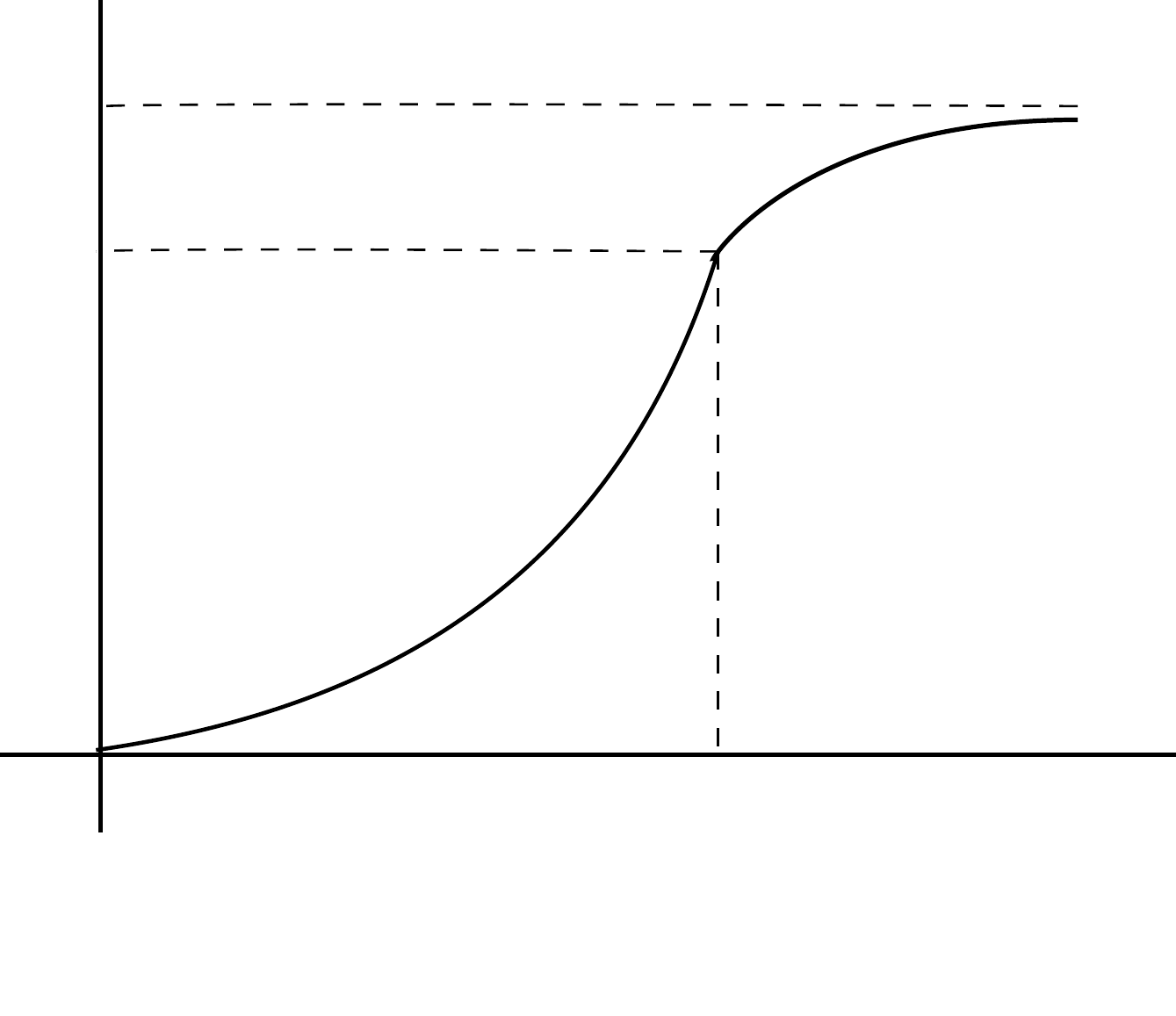}
 \caption{The graph of $w$.}
\end{figure}

To continue, define
\begin{equation} \label{defn w}
w = w_{h,\delta}(r) \defeq \begin{cases} r^2 & 0 < r < R_1, \\ R^2_1 + 1 -  (1 + (r - R_1))^{-\delta} & r \ge R_1.  \end{cases}
\end{equation}
According to \eqref{defn psi}, $\psi$ and $w$ satisfy the inequality 
\begin{equation} \label{w psi inequality}
h^{-2/3}(w\psi)' \ge  -\frac{E_{\text{min}}}{4}w', \qquad r>0, \text{ } r\neq R_0, \text{ } r \neq R_1. 
\end{equation}
We use \eqref{w psi inequality} in the proof of the Carleman estimate to ensure that a group of remainder terms is not too negative, see \eqref{minus two thirds est}. 

The next lemma proves elementary estimates involving $w$ and $w'$. We use them in the proof of Lemma \ref{Carleman lemma} to bring intermediate steps closer to \eqref{Carleman est}, note in particular \eqref{baseeqn2}.

\begin{lemma} \label{w w' inequalities}
Suppose $h \in (0,1]$. There exists $C > 1$ depending on $E_{\text{\emph{min}}}$, $R_0$, $c$, and $\delta$ so that for each $r \neq R_1$, it holds that
\begin{gather}
2wr^{-1} - w' \ge 0, \label{nonneg restrict w} \\
w(r) \le C h^{-2/3}  , \label{upper bound w}\\
w^2(r)/w'(r) \le C h^{-4/3} (1 + r)^{1 + \delta}, \label{upper bound w'} \\
w'(r) \ge C^{-1} \left(  \mathbf{1}_{\le 1}r +  \mathbf{1}_{\ge 1} (1 + r)^{-1 -\delta} \right). \label{lower bound w'}
\end{gather}
\end{lemma}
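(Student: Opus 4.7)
The plan is a direct case analysis. The function $w$ is piecewise defined across $r = R_1$, so I would establish each of \eqref{nonneg restrict w}--\eqref{lower bound w'} by splitting into the regions $0 < r < R_1$ and $r > R_1$. The key quantitative input, which drives all the $h$-dependent scales in \eqref{upper bound w}--\eqref{upper bound w'}, is
\[
R_1^2 = \left(1 + \frac{4c}{h^{2/3}E_{\min}}\right)R_0^2 \le Ch^{-2/3}, \qquad h \in (0,1].
\]
I would first record $w'(r) = 2r$ for $r \in (0, R_1)$ and $w'(r) = \delta(1 + r - R_1)^{-\delta - 1}$ for $r > R_1$, and abbreviate $u := 1 + r - R_1$ in the outer region, noting $1 \le u \le 1 + r$.

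With this setup, \eqref{nonneg restrict w} on $(0, R_1)$ collapses to the identity $2r - 2r = 0$, and \eqref{upper bound w} is immediate from $w(r) \le R_1^2 + 1$. For \eqref{upper bound w'}, the outer estimate reads $w^2/w' \le (R_1^2 + 1)^2 u^{1+\delta}/\delta \le Ch^{-4/3}(1+r)^{1+\delta}$, and the inner one reduces to $r^3/2 \le Ch^{-4/3}(1+r)^{1+\delta}$, which I would verify by handling $r \le 1$ trivially and using $r^3/(1+r)^{1+\delta} \le r^{2-\delta} \le R_1^{2-\delta} \le Ch^{-(2-\delta)/3} \le Ch^{-4/3}$ for $1 \le r < R_1$ (since $\delta \in (0,1)$ gives $(2-\delta)/3 < 4/3$). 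For \eqref{lower bound w'}, I would check the three regimes $r < 1$, $1 \le r < R_1$, and $r > R_1$: the first two follow from $w' = 2r$, and the last from $u \le 1 + r$, which yields $w' = \delta u^{-1-\delta} \ge \delta(1+r)^{-1-\delta}$.

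The one step I expect to require more than a line is \eqref{nonneg restrict w} in the outer region, because $2w/r$ and $w'$ both decay as $r \to \infty$ but at different polynomial rates. I plan to split further at $r = 2R_1$: on $R_1 \le r \le 2R_1$ the bounds $w \ge R_1^2$ and $w' \le \delta$, combined with $R_1 > R_0 > 3 > 1 > \delta$, give $2w/r \ge R_1 > \delta \ge w'$; for $r > 2R_1$ the estimate $u \ge r/2$ shrinks $w'$ to at most $\delta 2^{\delta + 1}r^{-1-\delta}$, which is then dominated by $2w/r \ge 2R_1^2/r$ via the trivial bound $r^\delta \ge 1 \ge \delta 2^\delta / R_1^2$. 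This polynomial-decay comparison is the one mild obstacle in the argument; the rest is bookkeeping of the piecewise definition of $w$.
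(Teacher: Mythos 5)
Your proposal is correct and follows essentially the same route as the paper: a case split at $r = R_1$ driven by the single estimate $R_1^2 \le C h^{-2/3}$. The only substantive deviation is in \eqref{nonneg restrict w} on $(R_1,\infty)$, where the paper avoids your further split at $2R_1$ by directly bounding $2(1+r-R_1)^{-\delta} + \delta r(1+r-R_1)^{-1-\delta} \le 2 + \delta + \delta R_1 \le 2R_1^2$ (using $r(1+r-R_1)^{-1-\delta} \le R_1 + 1$); both arguments are fine, the paper's is just a line shorter.
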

\begin{proof} When $r < R_1$, $2wr^{-1} - w' = 0$. If $r > R_1$, then 
\begin{equation*}
2wr^{-1} - w' = 2r^{-1}\left(R^2_1 + 1 - (1 + (r-R_1))^{-\delta}\right) - \delta (1 + (r-R_1))^{-1 - \delta}. 
\end{equation*}
So to finish proving \eqref{nonneg restrict w}, it is enough to show, 
\begin{equation*}
2R^2_1 \ge 2(1 + r - R_1)^{-\delta} + \delta r(1 + r-R_1)^{-1-\delta}, \qquad r > R_1.
\end{equation*}
Using $0 < \delta < 1$ and $R_1 > R_0 > 3$, we estimate,
\begin{equation*}
\begin{split}
2(1 + (r - R_1))^{-\delta} + \delta r\left(1 + r-R_1\right)^{-1-\delta} &\le  2 + \delta + \delta R_1 \\
& \le 2R^2_1.
\end{split}
\end{equation*}

To show \eqref{upper bound w}, simply note that 
\begin{equation*}
\begin{split}
w(r) &\le R^2_1 + 1 \\
&\le 2R^2_1 \\
& = 8 h^{-2/3} BE_{\text{min}}^{-1}.
\end{split}
\end{equation*}

For \eqref{upper bound w'}, when $0 < r \le R_1$,
\begin{equation*}
\begin{split}
w^2(r) /\left( w'(r) (1 + r)^{1 + \delta} \right)  &\le  2^{-1}r^2   \\
& \le 2^{-1} R^2_1\\
& = 2 h^{-2/3} BE_{\text{min}}^{-1}.
\end{split}
\end{equation*}
If $r \ge R_1$, then we use the bound $w(r) \le 2R^2_1$,
\begin{equation*}
\begin{split}
w^2(r)/\left( w'(r) (1 + r)^{1 + \delta} \right)  &= \delta^{-1}w^2(r) \left( \frac{1 +r - R_1}{1 +r } \right)^{1 + \delta} \\
&\le 4\delta^{-1}R^4_1  \\
& = 64 h^{-4/3} \delta^{-1} B^2 E_{\text{min}}^{-2}. 
\end{split}
\end{equation*}
As for \eqref{lower bound w'}, observe that when $1 < r \le R_1$,
\begin{equation*}
\begin{split}
w'(r)(1 + r)^{1+\delta} & = 2r(1 + r)^{1+\delta} \\
&\ge 2,
\end{split}
\end{equation*}
and when $ r > R_1 $,
\begin{equation*}
\begin{split}
 w'(r)(1 + r)^{1+\delta} & =  \delta \left( \frac{1 +r }{1 +r - R_1} \right)^{1 + \delta}.\\
&\ge \delta
\end{split}
\end{equation*}
\end{proof}

We now construct the Carleman weight $\varphi \in C^2(0, \infty)$ as a solution to an ODE with right hand side equal to $\psi$. The argument is modeled after Proposition 3.1 \cite{ddeh}.
\begin{lemma} \label{phi lemma}
Let $ h \in (0,1]$. There exists $\varphi = \varphi_h \in C^2(0, \infty)$ with the properties that
\begin{gather}
(\varphi')^2 - h^{4/3} \varphi'' = \psi, \qquad r >0, \label{solves ODE} \\
0 \le \varphi'(r) \le \sqrt{c}, \qquad r > 0, \label{bounds phi'} \\ 
0 \le \varphi'(r) \le Kr^{-1}, \qquad R_0 < r < R_1 \label{1/r bound phi'} \\
1 \le  \max \varphi = K\log(h^{-1}) \label{varphi h bound}, \\
\varphi'(r) \ge \sqrt{c} \tanh( \sqrt{c}/2), \qquad 0 < r < R_0/2, \label{lower bound phi' R_0/2} 
\end{gather}
where $K > 0$ depends on $\| V\|_\infty$, $R_0$ and $E_{\text{\emph{min}}}$ but not on $h$.
\end{lemma}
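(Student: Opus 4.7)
The key observation is that the Riccati-type equation \eqref{solves ODE} linearizes under the substitution $\varphi = -h^{4/3}\log f$: a direct computation gives $(\varphi')^2 - h^{4/3}\varphi'' = h^{8/3}f''/f$, so \eqref{solves ODE} becomes the linear equation $h^{8/3}f'' = \psi f$. The problem thus reduces to finding a positive, nonincreasing $C^2$ solution $f$ of this linear ODE on $(0,\infty)$.

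My construction is to solve the linear ODE on $(0, R_1]$ with terminal data $f(R_1) = 1$, $f'(R_1) = 0$, and then extend by $f \equiv 1$ on $[R_1, \infty)$. Continuity of $\psi$ on $(0, \infty)$ and the vanishing $\psi(R_1) = 0$ together make the matching $C^2$ across $R_1$. Since $\psi \ge 0$, the equation forces $f'' \ge 0$; starting from $f'(R_1) = 0$ and integrating backward, this convexity gives $f' \le 0$ and $f \ge f(R_1) = 1$ on $(0, R_1]$. Setting $\varphi \defeq K\log(h^{-1}) - h^{4/3}\log f$ for a constant $K$ to be selected, we obtain a $C^2$ function with $\varphi' \ge 0$ everywhere and $\varphi(R_1) = K\log(h^{-1}) = \max\varphi$.

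For the quantitative bounds I would argue as follows. On $(0, R_0]$, where $\psi \equiv c$, the linear ODE has explicit $\cosh/\sinh$ solutions; matching at $R_0$ produces the closed form
\[
\varphi'(r) = \sqrt{c}\,\tanh\bigl(\sqrt{c}\,h^{-4/3}(R_0 - r) + \theta_0\bigr), \qquad \theta_0 \defeq \tanh^{-1}\bigl(\varphi'(R_0)/\sqrt{c}\bigr) \ge 0,
\]
which yields \eqref{bounds phi'} on $(0, R_0]$ and also \eqref{lower bound phi' R_0/2}, since $h \le 1$ and $R_0 > 3$ make the tanh argument at least $\sqrt{c}/2$ for $r \le R_0/2$. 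On $(R_0, R_1)$ I would use a Riccati comparison: if $\varphi'$ exceeded a proposed supersolution $u^*$ (either $\sqrt{c}$ or $K_1/r$ with $K_1^2 \ge B$) at some $r_0$, then at the first subsequent point $r_2$ where $\varphi'(r_2) = u^*(r_2)$ one has $\varphi''(r_2) = h^{-4/3}\bigl((u^*(r_2))^2 - \psi(r_2)\bigr) \ge 0 \ge (u^*)'(r_2)$ with at least one strict inequality, contradicting that $\varphi' - u^*$ decreases through zero at $r_2$; the terminal value $\varphi'(R_1) = 0 < u^*(R_1)$ forces such an $r_2$ to exist. This delivers \eqref{bounds phi'} globally and \eqref{1/r bound phi'} on $(R_0, R_1)$. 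Finally, integrating $\varphi'$ gives $\max\varphi \le \sqrt{c}R_0 + (K_1/2)\log\bigl(1 + 4c/(h^{2/3}E_{\text{min}})\bigr) = (K_1/3)\log(h^{-1}) + O(1)$; selecting $K$ to match this asymptotic rate and then taking $h_0 \le e^{-1/K}$ secures \eqref{varphi h bound}, including $\max\varphi \ge 1$.

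The main subtle point is the Riccati comparison used for the upper bounds: because of the $u^2$ nonlinearity a violated supersolution is pushed further from the true solution as $r$ increases, so the argument runs only in one direction and crucially exploits the terminal value $\varphi'(R_1) = 0$ to produce the contradiction. Everything else is elementary manipulation of the explicit hyperbolic formula on $(0, R_0]$ and of the integral $\int dr/r$ on $(R_0, R_1)$.
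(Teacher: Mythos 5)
Your proof is correct, and it takes a genuinely different route from the paper's. The paper analyzes the Riccati IVP $y' = h^{-4/3}(y^2-\psi)$, $y(R_1)=0$ directly: it invokes Coddington--Levinson for local existence, bootstraps $|y| \le 4^{-k}$ to show $y\equiv 0$ on $[R_1,\infty)$, establishes $0\le y\le \sqrt c$ and the lower bound \eqref{lower bound phi' R_0/2} by comparing against explicit $\tanh$-solutions via a mean-value/first-zero contradiction, and sets $\varphi=\int_0^r y$ so that $\varphi(0)=0$ and $\max\varphi=\int_0^{R_1}y$. Your Cole--Hopf substitution $\varphi=-h^{4/3}\log f$ reduces existence to a \emph{linear} terminal value problem, and the vanishing $\psi(R_1)=0$ (built into \eqref{defn psi}) is exactly what makes the constant extension $C^2$ across $R_1$, a nice structural observation that the paper does not isolate. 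The convexity argument ($f''=h^{-8/3}\psi f\ge 0$ forces $f'\le 0$, $f\ge 1$ backward from $R_1$) is cleaner than the paper's bootstrap for global existence and for $\varphi'\ge 0$; you should briefly note that positivity of $f$ and the convexity inference are run simultaneously via a ``first zero'' continuity argument, since $f''\ge 0$ presupposes $f\ge 0$. Your closed-form $\tanh$ expression on $(0,R_0]$ encapsulates in one stroke the two comparisons the paper makes separately (upper bound by $z$, lower bound by $\tilde z$). Your Riccati supersolution comparison on $(R_0,R_1)$ with $u^*=\sqrt c$ and $u^*=K_1/r$, $K_1^2\ge B$, is the same technique the paper sketches for \eqref{1/r bound phi'}, though your $K_1/r$ is a strict supersolution rather than the paper's exact solution $\xi=\tilde B/r$, which is a small simplification. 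The only substantive difference at the end is your normalization: you add the constant $K\log(h^{-1})$ so that $\max\varphi = K\log(h^{-1})$ holds as a literal identity and the nontrivial content becomes $\varphi(0^+)\ge 0$, i.e.\ $\int_0^{R_1}\varphi' \le K\log(h^{-1})$; the paper sets $\varphi(0)=0$ and proves the same integral bound directly as $\max\varphi\le K\log(h^{-1})$. Both normalizations are used interchangeably downstream since Section \ref{proof of resolvent bound section} only needs $\varphi\ge 0$ and $\max\varphi\le K\log(h^{-1})$, as reflected in \eqref{varphi h bound reminder}.
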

Once we construct $\varphi$ according to \eqref{solves ODE}, it holds that $\varphi' \approx \sqrt{\psi}$ for $h$ small, and so \eqref{bounds phi'} through \eqref{lower bound phi' R_0/2} follow naturally from the definition of $\psi$. 
\begin{proof}
To begin, consider the solution to the initial value problem
\begin{equation} \label{phi prime eqn}
y' = h^{-4/3}(y^2 - \psi), \qquad y(R_1) = 0.
\end{equation}
According to Theorem 1.2 in Chapter 1 of \cite{cole}, there exists an open interval $I$ containing $R_1$ and a solution $y \in C^1(I)$ to \eqref{phi prime eqn}. In fact, this solution is unique on $I$. For if $y_1$, $y_2$ are two solutions to \eqref{phi prime eqn}, then $\tilde{y} \defeq y_1 - y_2$ solves $\tilde{y}' = (y_1 + y_2) \tilde{y}$, $\tilde{y}(R_1) = 0$, and hence is identically zero. 

We take 
\begin{equation} \label{from y to phi}
\varphi(r) \defeq \int_0^r y(s) ds.
\end{equation}
Hence $\varphi$ satisfies \eqref{solves ODE}. We now analyze $y$ to establish \eqref{bounds phi'}, \eqref{varphi h bound} and \eqref{lower bound phi' R_0/2}.

First, we show that $y(r) = 0$ for $r \ge R_1$, $r \in I$, and therefore $y$ extends to be identically zero on $[R_1, \infty)$. Because $y(R_1) = 0$, there exists $\varepsilon \in (0,h^{4/3})$ so that $[R_1 , R_1 + \varepsilon) \subseteq I$ and $|y(r) |\le 1/2$ on  $[R_1 , R_1 + \varepsilon)$. Therefore, using \eqref{phi prime eqn}, we see that $|y'(r)| = h^{-4/3}|y(r)|^2 \le (4h^{4/3})^{-1}$ on $[R_1 , R_1 + \varepsilon)$. Hence
\begin{equation*}
\begin{split}
|y(r)| \le \int^r_{R_1} |y'(s)| ds 
&\le \frac{\varepsilon}{4h^{4/3}}\\
&\le \frac{1}{4}, \qquad r \in [R_1, R_1 + \varepsilon).
\end{split}
\end{equation*}
Applying $|y'(r)| = h^{-4/3}|y(r)|^2$ on $[R_1, R_1 + \varepsilon)$ another time, we then get $|y'(r)| \le (16h^{4/3})^{-1}$ and use it to show that $|y(r)| \le 16^{-1}$, $ r \in [R_1  , R_1 + \varepsilon)$. Continuing in this fashion, we see that $y(r) = 0$ for $r \in [R_1  , R_1 + \varepsilon)$. Therefore $y$ extends to be identically zero on $[R_1, \infty)$. 

Moving on, we now show that
\begin{equation} \label{bounds on y}
0 \le y \le \sqrt{\psi(R_0)} = \sqrt{c} 
\end{equation}
where it is defined on $(0,R_1]$. To show $y \ge 0$, assume for contradiction that there exists $0< r_0 < R_1$ with $y(r_0) < 0$. Then, because $y' = h^{-4/3}(y^2 - \psi) \le h^{-4/3}y^2$, we have $y'(r)/(y(r))^2 \le h^{-4/3}$, for $r$ near $r_0$. This implies 
\begin{equation} \label{bigger than zero}
\begin{split}
y(r_0)^{-1} - y(r)^{-1} & = \int^r_{r_0} \frac{y'(s)}{(y(s))^2} ds  \\
& \le \frac{r - r_0}{h^{4/3}}, \qquad \text{$r \ge r_0$, $r$ near $r_0$}.
\end{split}
\end{equation}
As $r$ approaches $\inf\{r \in [r_0, \infty): y(r) = 0 \} \le R_1$, \eqref{bigger than zero} must hold. But this is a contradiction because the left side becomes arbitrarily large, while the right side remains bounded. So $y(r) \ge 0$ where it is defined on $(0, R_1]$. 

To show $y \le \sqrt{\psi(R_0)}$, we compare $y$ to the solution of the initial value problem
\begin{equation*}
\begin{split}
z' &= h^{-4/3}(z^2 - \psi(R_0)) \\
& = h^{-4/3}(z^2 - c), \qquad z(R_1) = 0,
\end{split}
\end{equation*}
This solution exists for all $r > 0$ and is given by
\begin{equation*}
\begin{split}
z(r) &= \sqrt{c} \frac{1- \text{exp}\left(-2h^{-4/3}\sqrt{c} (R_1 - r) \right)}{1+ \text{exp}\left(-2h^{-4/3}\sqrt{c} (R_1 - r) \right) } \\
& = \sqrt{c} \tanh \left(h^{-4/3} \sqrt{c}(R_1 - r)\right).
\end{split}
\end{equation*}
Suppose for contradiction that there exists $r_0 < R_1$ such that $y(r_0) > z(r_0)$. Set $\zeta \defeq y -z$. Then
$\zeta'  \ge h^{-4/3}(y+z)\zeta$, $\zeta(r_0) >0$, and $\zeta(R_1) =0$. 

Put $r_1 \defeq \inf \{r \in (r_0,R_1]: \zeta(r) = 0\}$. By the mean value theorem, there exists $\tilde{r} \in (r_0, r_1)$ so that 
\begin{equation*}
\begin{split}
\zeta'(\tilde{r}) &=  \frac{\zeta(r_1) - \zeta(r_0)}{r_1 - r_0} \\
&= \frac{-\zeta(r_0)}{r_1 - r_0}\\
&< 0.
\end{split}
\end{equation*}
In addition, $\zeta(\tilde{r}) > 0$ by the definition of $r_1$. But this contradicts $\zeta'(\tilde{r}) \ge h^{-4/3}\zeta(\tilde{r})(y(\tilde{r}) + z(\tilde{r}))$ since $y + z \ge 0$ where $y$ is defined on $(0,R_1)$. 

So we have shown that  $0 \le y \le z \le \sqrt{c}$ where it is defined on $(0, R_1)$.It then follows by Theorem 1.3 in Chapter 2 of \cite{cole} that $y$ extends to all of $(0, R_1)$, where it obeys the same bounds.

We omit the proof of \eqref{1/r bound phi'}. However, we remark that one can show 
\begin{equation} \label{tildeB/r}
y \le \xi(r) \defeq \tilde{B}/r \qquad \text{on } (R_0, R_1), 
\end{equation}
where
\begin{equation*}
\tilde{B} \defeq \left(\sqrt{4B + h^{8/3}} - h^{4/3}\right)/2,
\end{equation*}
by first noting that $\xi$ solves 
\begin{equation}
\xi' = h^{-4/3}(\xi^2 - (B/r^2)), \quad \xi(R_1) = \tilde{B}/r,
\end{equation}
and then comparing $y$ and $\xi$ by the same method as in the preceding paragraph. 

Lastly, we show that 
\begin{equation} \label{y lower bound R_0/2}
y(r) \ge \sqrt{c}\tanh \left(\sqrt{c}/2 \right), \qquad r \in (0, R_0/2].
\end{equation}
To see this, let $\tilde{z}$ solve the initial value problem
\begin{equation*}
\tilde{z}' = h^{-4/3}(\tilde{z}^2 - \psi), \qquad \tilde{z}(R_0) = 0.
\end{equation*}
Then $\tilde{z}$ is given by
\begin{equation*}
\tilde{z}(r) = \sqrt{c}\tanh \left(h^{-4/3} \sqrt{c} \left(R_0 - r\right) \right).
\end{equation*}
Set $\tilde{\zeta} \defeq y - \tilde{z}$. To show \eqref{y lower bound R_0/2}, it is enough to see that $\tilde{\zeta} \ge 0$ on $(0, R_0)$, and we give an argument similar to the one in the preceding paragraph. For contradiction, suppose there exists $0 < r_2 \le R_0$ such that $\tilde{\zeta}(r_2) < 0$. Put $r_3 \defeq \inf\{r \in (r_2, R_0]: \tilde{\zeta}(r) = 0\}$. Such an $r_3$ exists because $\tilde{\zeta}(R_0) = y(R_0) \ge 0$. By the mean value theorem, there is some $r^* \in (r_2, r_3)$ so that $\tilde{\zeta}'(r^*) = -(r_3 - r_2)^{-1}\tilde{\zeta}(r_2) >0$, and furthermore $\tilde{\zeta}(r^*) < 0$ by the definition of $r_3$ . But also $\tilde{\zeta}'(r^*) = h^{-4/3}\tilde{\zeta}(r^*)(y(r^*) + \tilde{z}(r^*)) \le 0$, and so we have contradiction.

We now have enough properties of $y$ to finish the proof. With $\varphi$ defined by \eqref{from y to phi}, we observe that \eqref{bounds phi'} follows from \eqref{bounds on y}, and \eqref{lower bound phi' R_0/2} from \eqref{y lower bound R_0/2}. 

Lastly, we use \eqref{defn B and R_1}, \eqref{lower bound tanh}, \eqref{tildeB/r}, $R_0 > 3$, and $h \in (0,1]$ to see
\begin{gather}
 \max \varphi \ge \sqrt{c} \int_0^{R_0/2} \tanh \left(\sqrt{c}/2 \right)ds \ge 1, \nonumber \\
\max \varphi \le \int_0^{R_0} \sqrt{c} ds + \int_{R_0}^{R_1} \tilde{B}/s ds \le \sqrt{c} R_0 + \tilde{B}\log(R_1/R_0) \le K \log(h^{-1}) \label{remove power get log}, 
\end{gather}
where $K > 0$ depends on $\| V\|_\infty$, $R_0$ and $E_{\text{min}}$ but not on $h$. This shows  \eqref{varphi h bound} and completes the proof. 
\\
\end{proof}

\section{Proof of the Carleman estimate} \label{Carleman section}
In this section, we use the weight functions $w$ and $\varphi$ constructed in the previous section to prove Lemma \ref{Carleman lemma}. We make integral estimates using polar coordinates $(r, \theta) \in (0, \infty) \times \mathbb{S}^{n-1}$ on $\mathbb{R}^n$. As in the previous chapter, the starting point is a conveniently chosen conjugation 

\begin{equation*}
\begin{split}
P_{\varphi}   &\defeq e^{\varphi/h^{4/3}} r^{(n-1)/2} (P(h) - E - i\varepsilon) r^{-(n-1)/2}  e^{-\varphi/h^{4/3}} \\
 &= -h^2 \partial_r^2 + 2 h^{2/3} \varphi' \partial_r + \Lambda + \rho  +V -h^{-2/3}\psi - E - i \varepsilon, 
 \end{split}
\end{equation*}
where 
\begin{equation*}
0 \le \Lambda = \Lambda_h(r) \defeq  -h^2r^{-2}\Delta_{\mathbb{S}^{n-1}}, \qquad \rho = \rho_h(r) \defeq h^2(2r)^{-2}(n-1)(n-3).
\end{equation*}
To prove the Carleman estimate, we need another simple estimate, this time involving involving $w$, $w'$ and $\rho$.
\begin{lemma} \label{control effective potential}
There exists $ h_0 \in (0, 1]$ depending on $E_{\text{\emph{min}}}$ and $n$ so that 
\begin{equation} \label{effective potential estimate}
\left(2w(r)r^{-1} - w'(r)\right)\rho(r) \ge -\frac{ E_{\text{\emph{min}}}}{4}w'(r), 
\end{equation}
for all $E \ge E_{\text{\emph{min}}}$, $r \neq R_1$, and $h \in (0,h_0]$.
\begin{proof}
If $r < R_1$, then $2wr^{-1} - w' = 0$ and \eqref{effective potential estimate} follows immediately. On the other hand, if $r > R_1$, we use $R_1 > 3$ to see that  
\begin{equation*}
\begin{split}
\left(2w(r)r^{-1} - w'(r)\right)\rho(r) &\ge -h^2(2r)^{-2}|n-1||n-3| w'(r) \\
& \ge  -h^2 |n-1||n-3| w'(r)/36.
\end{split}
\end{equation*}
So we obtain \eqref{effective potential estimate} for $r > R_1$ by taking $h_0$ sufficiently small. \\
\end{proof}

\end{lemma}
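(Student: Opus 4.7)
The plan is to split the argument according to the piecewise definition of $w$ in \eqref{defn w}. For $0 < r < R_1$, direct computation from $w(r) = r^2$ yields $w'(r) = 2r$ and hence $2w(r)/r - w'(r) \equiv 0$ on this interval; the left side of \eqref{effective potential estimate} vanishes while the right side $-(E_{\text{min}}/4)w'(r)$ is non-positive, so the estimate is immediate.

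For $r > R_1$, inequality \eqref{nonneg restrict w} gives $2w/r - w' \ge 0$. When $n = 1$ or $n = 3$ one has $\rho \equiv 0$, and for $n \ge 4$ one has $\rho \ge 0$; in either case the left side of \eqref{effective potential estimate} is non-negative and the estimate is trivial. The only dimension requiring genuine work is $n = 2$, where $\rho(r) = -h^2/(4r^2) < 0$ and the estimate reduces to $(2w/r - w')h^2/(4r^2) \le (E_{\text{min}}/4) w'$. For this subcase I would exploit the fact that the region $r > R_1$ lies far from the origin when $h$ is small: from \eqref{defn B and R_1}, $R_1^2 = 4B/(h^{2/3}E_{\text{min}})$, so $h^2/r^2 \le h^{8/3}E_{\text{min}}/(4B)$ is very small. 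Combining this with the upper bound $w \le C h^{-2/3}$ from \eqref{upper bound w} (which controls $2w/r - w' \le 2w/r$) and the lower bound on $w'$ from \eqref{lower bound w'} gives the desired comparison.

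The main obstacle will be obtaining the estimate uniformly in $r$ as $r \to \infty$ in the $n = 2$ subcase, since both sides vanish at infinity but at different rates: $2w/r - w'$ decays like $r^{-1}$ while $w'$ decays like $r^{-1-\delta}$, so their ratio grows like $r^\delta$. Thus the argument reduces to showing that $h^2 r^{\delta - 2}$ can be made uniformly small for $r > R_1$. Since $\delta - 2 < 0$ by the choice $\delta \in (0,1)$, this expression is maximized at $r = R_1$, where it is of order $h^{2 + (2-\delta)/3}$, which tends to $0$ as $h \to 0$. Choosing $h_0$ small enough to dominate a suitable multiple of $E_{\text{min}}\delta$ closes the estimate.
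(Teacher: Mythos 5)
Your decomposition is sound, and — importantly — you correctly isolate the only case that requires any work: for $r < R_1$ the left side of \eqref{effective potential estimate} vanishes identically, and for $r > R_1$ with $n \neq 2$ one has $\rho \ge 0$ together with \eqref{nonneg restrict w}, so the left side is non-negative while the right is non-positive. Only $n = 2$, where $\rho = -h^2/(4r^2) < 0$, is non-trivial.

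In that remaining case your plan is essentially right, but your exponent bookkeeping drops a factor. You reduce the claim to ``\,$h^2 r^{\delta - 2}$ is uniformly small on $(R_1,\infty)$,'' reasoning that $2w/r - w'$ decays like $r^{-1}$ and $w'$ like $r^{-1-\delta}$. The decay \emph{rates} are right, but the constant in front of $r^{-1}$ is $2(R_1^2+1) \sim C h^{-2/3}$, coming precisely from \eqref{upper bound w}, which you correctly flagged as relevant but then did not carry through. The quantity that must be controlled is therefore $\sim h^{4/3} r^{\delta-2}$, not $h^2 r^{\delta-2}$. Concretely, using $2w/r - w' \le 2w/r \le 2(R_1^2+1)/r$ and $w'(r) = \delta(1+r-R_1)^{-1-\delta} \ge \delta r^{-1-\delta}$ for $r > R_1$, the estimate reduces to
\begin{equation*}
2(R_1^2+1)\,h^2 \le E_{\text{min}}\,\delta\, R_1^{\,2-\delta},
\end{equation*}
and since $R_1 \sim C h^{-1/3}$ this amounts to $h^{(6-\delta)/3} \lesssim 1$, rather than your $h^{2+(2-\delta)/3} = h^{(8-\delta)/3}$. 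Both exponents are positive, so your overall conclusion stands, but the correct threshold on $h_0$ comes from $(6-\delta)/3$.

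It is worth noting that the paper's own treatment of $r > R_1$ is a single line asserting
\begin{equation*}
\bigl(2w r^{-1} - w'\bigr)\rho \ge -h^2(2r)^{-2}\,|n-1|\,|n-3|\,w',
\end{equation*}
which for $n = 2$ is equivalent to $2w r^{-1} - w' \le w'$, i.e.\ $w \le r w'$. This fails for $r > R_1$: as $r \to R_1^+$ one has $2w r^{-1} - w' \to 2R_1 - \delta$ while $w' \to \delta$, and $R_1 > 3 > \delta$; the gap only widens as $r$ grows. So the paper's displayed chain handles $n \neq 2$ (where it is trivially true) but glosses over $n = 2$, which is the only case carrying content. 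Your argument — pairing \eqref{upper bound w}, \eqref{lower bound w'}, and the $h$-dependent size of $R_1$ from \eqref{defn B and R_1} — is essentially the repair that is needed; just be sure to keep the $h^{-2/3}$ contributed by $w$.
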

\begin{proof}[Proof of Lemma \ref{Carleman lemma}]
Let $\int_{r, \theta}$ denote the integral over $(0, \infty) \times \mathbb{S}^{n-1}$ with respect to $dr d\theta$, where $d \theta$ is the usual surface measure on $\mathbb{S}^{n-1}$. 

To show \eqref{Carleman est}, it suffices to prove that
\begin{equation} \label{polar Carleman}
\begin{aligned}
\int_{r, \theta} ( \mathbf{1}_{\le 1} r + &\mathbf{1}_{\ge 1} (1 + r)^{-1 - \delta} )|u|^2 \le \\ &\frac{C}{h^{10/3}} \left( \int_{r, \theta}(1 + r)^{1 + \delta} |P_\varphi u|^2 + \varepsilon
\int_{r, \theta} |u|^2 \right), \quad u \in r^{(n-1)/2}e^{\varphi/h^{4/3}}C_0^\infty(\mathbb{R}^n). 
\end{aligned}
\end{equation}
Without loss of generality, we may assume $\varepsilon \le h^{10/3}$. To show \eqref{polar Carleman}, we proceed in the spirit of the previous chapter and of \cite{cavo, da14, rota15} and define the functional $F$ by
\begin{equation} \label{defn F}
F(r) \defeq \| hu'\|^2_S - \langle (\Lambda + \rho -h^{-2/3}\psi- E) u,u  \rangle_S, \qquad r > 0,
\end{equation} 
where $\| \cdot \|_S$ and $\langle \cdot , \cdot \rangle_S$ denote the norm and inner product on $\mathbb{S}^{n-1}$, respectively.

We compute the derivative of $F$, which exists for all $r \neq R_0$, $r \neq R_1$, 
\begin{equation*}
\begin{split}
F'(r) &=  2 \real \langle h^2 u'', u' \rangle_S - 2 \real \langle (\Lambda + \rho -h^{-2/3}\psi - E)u, u' \rangle_S \\
&+ 2r^{-1} \langle (\Lambda + \rho) u , u \rangle  + \langle h^{-2/3}\psi' u ,u \rangle_S \nonumber. 
\end{split}
\end{equation*}
Next, we calculate, for $r \neq R_0,$ $r \neq R_1$,
\begin{equation*}
\begin{split}
wF' + w'F &= 2w \real \langle h^2 u'', u' \rangle_S - 2w \real \langle (\Lambda + \rho -h^{-2/3}\psi - E) u, u' \rangle_S \\
&+ 2wr^{-1} \langle (\Lambda + \rho) u,u \rangle_S + h^{-2/3} w \psi' \| u \|^2_S\\
&+ w'\|hu' \|^2_S - w'  \langle (\Lambda + \rho) u,u \rangle_S +  w'\langle (h^{-2/3}\psi + E)u,u \rangle_S \\
&= -2w \real \langle P_\varphi u, u' \rangle_S  + 2w\varepsilon \imag \langle u, u' \rangle_S \\
&+ h^2w' \| u' \|^2_S + (2wr^{-1} - w')\langle (\Lambda + \rho) u,u \rangle_S \\
&+ Ew'\|u\|^2_S + 4h^{2/3}w \varphi'  \| u' \|^2_S   +h^{-2/3}(w \psi)' \|u \|^2_S+   2 w \real \langle Vu ,u' \rangle_S.\\
\end{split}
\end{equation*}
Note that we have have added and subtracted $2 w \real \langle Vu ,u' \rangle_S$, $4h^{2/3}w\varphi'\|u\|^2_S$, and $2w\varepsilon \imag \langle u,u' \rangle_S$ in order to recover $P_\varphi u$ in line four. Using $w' > 0$, $2wr^{-1} - w' \ge 0$, $\Lambda \ge 0$ and $-2\real\langle a,b \rangle + \| b\|^2 \ge -\|a \|^2$, we estimate, for $r \neq R_0,$ $r \neq R_1$,
 \begin{equation} \label{intermediate w'F + wF'}
 \begin{split}
wF' + w'F &\ge -\frac{w^2}{h^2w'} \|P_\varphi u \|^2_S + 2 w \varepsilon \imag \langle u, u' \rangle_S  \\
&+ Ew' \| u \|^2_S + (2wr^{-1} - w')\rho \| u  \|^2_S \\
&+ 4h^{2/3}w \varphi' \|u'\|^2_S  + h^{-2/3}(w \psi)' \| u \|^2_S +  2 w \real \langle Vu ,u' \rangle_S.\\
\end{split}
\end{equation} 

To continue, let $\mathbf{1}_{B(0,R_0/2)}$ denote the characteristic function of $B(0,R_0/2)$. We bound $2 w \real \langle Vu ,u' \rangle_S$ from below by
\begin{equation*}
\begin{split}
2 w \real \langle Vu, u' \rangle_S & \ge - 2w(r) \int_{\theta} |V(r, \theta)u(r, \theta)u'(r, \theta)| d\theta\\ 
& \ge - \gamma\| V \|_\infty \mathbf{1}_{B(0,R_0/2)}(r) w(r)   \| u'(r, \theta) \|^2_S \\
& -\gamma^{-1} \| V \|_\infty \mathbf{1}_{B(0,R_0/2)}(r)  w(r) \|u(r, \theta) \|^2_S, \qquad \gamma > 0.
\end{split}
\end{equation*}
Plugging this lower bound into \eqref{intermediate w'F + wF'}, we get for  $r \neq R_0, r \neq R_1$.
\begin{equation} \label{w'F + wF'}
\begin{split} 
wF' + w' F &\ge -\frac{w^2}{h^2w'} \|P_\varphi u \|^2_S + 2 w \varepsilon \imag \langle u, u' \rangle_S  \\
& + \left(4h^{2/3} \varphi' - \gamma\| V \|_\infty \mathbf{1}_{B(0,R_0/2)} \right)w \| u' \|^2_S \\
& +\left( Ew'+ (2wr^{-1} - w')\rho + h^{-2/3}(w \psi)' - \gamma^{-1} \| V \|_\infty \mathbf{1}_{B(0,R_0/2)} w\right) \| u \|^2_S. \\
\end{split}
\end{equation}
Now, fix $\gamma = h^{2/3}$ (the author is grateful to Jeff Galkowski for the suggestion to use an $h$-dependent $\gamma$). Then, use $\psi = c$ on $(0,R_0]$ along with \eqref{lower bound c} to get
\begin{equation*}
\begin{split}
(w \psi)' - \| V \|_\infty \mathbf{1}_{B(0,R_0/2)}w  & \ge r\left(2c  -\| V \|_\infty R_0/2 \right) \\
& \ge 0, \qquad r \in (0, R_0/2].
\end{split}
\end{equation*}
Combining this with \eqref{w psi inequality} and \eqref{effective potential estimate}, we have 
\begin{equation} \label{minus two thirds est}
\left(Ew' + (2wr^{-1} - w')\rho + h^{-2/3}(w \psi)' - \gamma^{-1} \| V \|_\infty \mathbf{1}_{B(0,R_0/2)}w\right) \|u\|_S^2 \ge \frac{E_{\text{min}}}{2}w'\|u\|^2_S.
\end{equation}
for all $r > 0$, $r \neq R_0$, $r \neq R_1$, and all $h \in (0, h_0]$, where $h_0$ is as given in Lemma \ref{control effective potential}.

On the other hand, according to \eqref{lower bound tanh}, \eqref{bounds phi'}, and \eqref{lower bound phi' R_0/2}, we have
\begin{equation*}
\begin{split}
4 \varphi' - \| V \|_\infty \mathbf{1}_{B(0,R_0/2)} \ge 0 , \qquad r >0.
\end{split}
\end{equation*}

Updating \eqref{w'F + wF'} with these lower bounds, we get
\begin{equation} \label{final w'F + wF'}
\begin{split}
w F' + w' F &\ge -\frac{w^2}{h^2w'} \|P_\varphi \|^2_S + 2 w \varepsilon \imag \langle u, u' \rangle_S \\
&+ \frac{E_{\text{min}}}{2}w' \| u\|_S^2, \qquad r \neq R_0, \text{ } R_1. 
\end{split} 
\end{equation}
Next, we apply Fatou's lemma, along with the fundamental theorem of calculus to get
\begin{equation} \label{Fatou}
\int_0^\infty (w(r)F(r))'  \le -\liminf_{r \to 0} w(r)F(r) = 0.
\end{equation} 
Integrating \eqref{final w'F + wF'} with respect to $dr$ and using \eqref{Fatou}, we arrive at 
\begin{equation} \label{baseeqn}
\frac{E_{\text{min}}}{2}\int_{r, \theta} w'|u|^2 \le \frac{1}{h^2} \int_{r, \theta} \frac{w^2}{w'} |P_{\varphi} u|^2 + 2 \varepsilon \int_{r, \theta}w |uu'|.
\end{equation}
Combining \eqref{baseeqn} with, \eqref{upper bound w'} and \eqref{lower bound w'} gives for $h \in (0,h_0]$
\begin{equation} \label{baseeqn2} 
 \int_{r, \theta} \left( \mathbf{1}_{\le 1} r + \mathbf{1}_{\le 1} (1 + r)^{-1 - \delta}\right)|u|^2 \le \frac{C}{h^{10/3}} \int_{r, \theta} (1 + r)^{1 + \delta}|P_{\varphi} u|^2  + 2 \varepsilon \int_{r, \theta} w|uu'|,
\end{equation}
where $C>1$ is a constant that depends on $E_{\text{min}}$, $R_0$, $n$, $c$ and $\delta$, but is independent of $h$ and $u$. We will reuse $C$ is the ensuing estimates, but its precise value will change from line to line.

We focus on the last term in \eqref{baseeqn2}. Our goal is to show 
\begin{equation} \label{bound uu'}
2\int_{r, \theta} w|uu'| \le \frac{C}{h^2} \left( \int_{r, \theta} w^2|P_{\varphi} u|^2 +  \int_{r, \theta} \left(1 + w^2 + \rho w^2\right) |u|^2\right), \qquad h \in (0,h_0]. 
\end{equation}
 If we have shown \eqref{bound uu'}, we can substitute it into \eqref{baseeqn2} and use \eqref{upper bound w} along with
 \begin{equation*}
 |\rho w^2| \le Ch^{2/3}, \qquad r > 0
 \end{equation*}
to get
\begin{equation*} 
\begin{split}
\int_{r, \theta} \left( \mathbf{1}_{\le 1} r + \mathbf{1}_{\le 1} (1 + r)^{-1 - \delta}\right)|u|^2    &\le \frac{C}{h^{10/3}} \int_{r, \theta} (1 + r)^{1 + \delta} |P_{\varphi} u|^2 + \frac{C\varepsilon}{h^{10/3}} \int_{r, \theta} |P_{\varphi} u|^2  \\
&+ \frac{C \varepsilon}{h^{10/3}} \int_{r, \theta} |u|^2, \qquad h \in (0, h_0].
\end{split}
\end{equation*}
Using $\varepsilon \le h^{10/3}$  then gives \eqref{polar Carleman}.

To show \eqref{bound uu'}, we first write
\begin{equation} \label{peter paul uu'}
2\int_{r, \theta} w|uu'| \le \frac{1}{h^2} \int_{r, \theta} |u|^2 +  \int_{r, \theta} w^2|hu'|^2. 
\end{equation}
We will now show that
\begin{equation} \label{bound on w^2|hu'|^2}
 \int_{r, \theta} w^2|hu'|^2 \le C  \int_{r, \theta} w^2|P_{\varphi} u|^2 +  \frac{C}{h^{2/3}} \int_{r, \theta} (w^2 + |\rho w^2|) |u|^2 , \qquad h \in (0,h_0],
\end{equation}
which will complete the proof of the Lemma. To show \eqref{bound on w^2|hu'|^2}, we first integrate by parts,
\begin{equation*}
\begin{split} 
 \int_{r, \theta} w^2|hu'|^2 &=  \real \left(  \int_{r, \theta} w^2 \bar{u}(-h^2u'') -2h^2w w' \bar{u}u'  \right),
\end{split}
\end{equation*}
and then estimate,
\begin{equation} \label{easy by parts estimate}
 \real \int_{r, \theta} -2h^2w w' \bar{u}u' \le \frac{h^2}{\eta_1} \int_{r, \theta} (w')^2 |u|^2+ \eta_1 \int_{r, \theta} w^2 |h u'|^2, \qquad \eta_1 > 0,
\end{equation}
 \begin{equation} \label{hard by parts estimate}
 \begin{split} 
  \real  \int_{r, \theta} w^2 \bar{u}&(-h^2u'') \\
  &=  \real \int_{r, \theta}  w^2 \bar{u} (P_{\varphi} - 2h^{2/3}\varphi' \partial_r - \Lambda - \rho - V+ h^{-2/3}\psi + E + i \varepsilon )u 
\\& \le  \int_{r, \theta} w^2|P_{\varphi}u||u| + 2 \int_{r, \theta} w^2 \varphi' |h^{2/3}u'||u| \\
&+  \int_{r, \theta}  w^2|E-\rho- V + h^{-2/3}\psi||u|^2
\\& \le  \frac{1}{2}  \int_{r, \theta} w^2 |P_{\varphi}u|^2 + \eta_2 \sqrt{c} \int_{r, \theta} w^2|hu'|^2 +  \int_{r, \theta} |\rho w^2| |u|^2 \\&+  \left( \frac{\sqrt{c}}{h^{2/3}\eta_2} + E_{\text{max}} + \| V\|_\infty + \frac{c}{h^{2/3}} + \frac{1}{2} \right)  \int_{r, \theta} w^2 |u|^2, \qquad \eta_2 > 0.
\end{split}
\end{equation}
Now, take $\eta_1 = 1/4$,  $ \eta_2 = 1/(4\sqrt{c})$, and bound $\int_{r, \theta} w^2|hu'|^2$ from above in \eqref{peter paul uu'} using \eqref{easy by parts estimate} and \eqref{hard by parts estimate}. We get, for $h \in (0, h_0]$,
\begin{equation*}
 \int_{r, \theta} w^2|hu'|^2 \le C \int_{r, \theta}  w^2|P_{\varphi}u|^2 + \frac{C}{h^{2/3}} \int_{r, \theta} (w^2 + \rho w^2) |u|^2 + \frac{1}{2} \int_{r, \theta}  w^2|hu'|^2.
\end{equation*}
Subtracting the last term to the left side and multiplying through by 2, we arrive at \eqref{bound on w^2|hu'|^2}.\\
\end{proof}

\section{Proof of the theorem} \label{proof of resolvent bound section}

In this final section, we use Lemma \ref{Carleman lemma} to prove the Theorem. We condense notation by setting $L^2 = L^2(\mathbb{R}^n)$, $H^1 = H^1(\mathbb{R}^n)$, $H^2 = H^2(\mathbb{R}^n)$, $C^\infty_0 = C^\infty_0(\mathbb{R}^n)$, and by renaming the weight appearing on the left side of \eqref{Carleman est},
\begin{equation*}
b(r) \defeq \mathbf{1}_{\le 1} r^{1/2} + \mathbf{1}_{\ge 1}(1 + r)^{-s}.  
\end{equation*} 
We also employ of a smooth version of the weight $(1 + r)^s$, which we denote by $m$, 
\begin{equation*}
m = m_\delta(r) \defeq (1 + r^2)^{(1 + \delta)/4}.
\end{equation*}

Before giving the main argument, we make two reductions. First, since
\begin{equation*}
(1 + r)^s/\sqrt{2} \le m(r) \le (1 + r)^s, \qquad r > 0, 
\end{equation*}
to prove the Theorem it suffices to show \eqref{exp bound} holds except with each instance of $(1 + |x|)^{-s}$ replaced by $m^{-1}$. Second, to obtain the desired $L^2 \to H^2$ bound, we merely need to show
\begin{equation} \label{can reduce to L2 to L2}
\begin{split}
  \| m^{-1} (P(h) - E &-  i\varepsilon)^{-1} m^{-1} \|_{L^2 \to L^2}  \le e^{Ch^{-4/3} \log(h^{-1})}, \\
  E \in [E_{\text{min}}, &E_{\text{max}}], \text{ } 0 < \varepsilon < 1, \text{ } h \in (0,h_0].
  \end{split}
\end{equation}
The argument for making this reduction is standard, but we give it now for the sake of completeness.  

Throughout the followings estimates, and later in the proof of the Theorem, $C > 1$ denotes a constant that is independent of $h$, but may depend on $E_{\text{min}}$, $E_{\text{max}}$, $\supp V$, $\|V \|_\infty$, $n$, and $s$, It's precise value will change from the line to line. 

For each $f \in H^2$, it holds that
\begin{equation*}
\|f \|_{H^2} \le C \left( \| f \|_{L^2} + \|\Delta f \|_{L^2} \right).
\end{equation*}
Therefore to show \eqref{exp bound}, we only need that  
\begin{equation} \label{commute Laplacian}
\begin{split}
\| \Delta m^{-1} (P(h) - &E- i\varepsilon)^{-1} m^{-1} f \|_{L^2} \le e^{Ch^{-4/3} \log(h^{-1})}\|f\|_{L^2}, \\
 E \in [E_{\text{min}}, &E_{\text{max}}], \text{ } 0 < \varepsilon < 1, \text{ } h \in (0,h_0], \text{ } f \in L^2.
\end{split}
\end{equation}

If  $[\Delta, m^{-1}]$ denotes the commutator of $\Delta$ and $m^{-1}$, then a simple calculation shows 
\begin{equation*}
[\Delta, m^{-1}]mf =  m^{-1} (\Delta m) f + 2 m^{-1} \nabla m \cdot \nabla f, \qquad f \in H^1,
\end{equation*}
which is a bounded map $H^1 \to L^2$. 
Using \eqref{can reduce to L2 to L2} along with
\begin{equation*}
\|\nabla f \|_{L^2} \le C\gamma \|f\|_{L^2} + \gamma^{-1} \|\Delta f\|_{L^2}, \qquad \gamma > 0, \text{ } f \in L^2,
\end{equation*}
and
\begin{equation*}
\Delta (P(h) - E - i \varepsilon)^{-1} = h^{-2} \left(V - E - i\varepsilon \right) (P(h) - E - i\varepsilon)^{-1} -h^{-2},
\end{equation*}
we have for $E \in [E_{\text{min}}, E_{\text{max}}]$ and $h$ small enough, 
\begin{equation*}
\begin{split}
\|\Delta m^{-1} (P(h) - E -i\varepsilon)^{-1} m^{-1}f\|_{L^2} & \le \| [\Delta, m^{-1}]  mm^{-1}(P(h) - E -i\varepsilon)^{-1} m^{-1}f \|_{L^2}  \\
&+ \| m^{-1} \Delta (P(h) - E -i\varepsilon)^{-1} m^{-1} f \|_{L^2} \\
& \le  C\|m^{-1}  (P(h) - E -i\varepsilon)^{-1} m^{-1} f\|_{H^1} \\
&+   Ch^{-2} e^{Ch^{-4/3} \log(h^{-1})} \| f \|_{L^2} \\
& \le C(1 + \gamma) e^{Ch^{-4/3} \log(h^{-1})} \| f \|_{L^2} \\
&+ C\gamma^{-1}\|\Delta m^{-1}(P(h) - E -i\varepsilon)^{-1} m^{-1} f\|_{L^2} \\
&+   e^{Ch^{-4/3} \log(h^{-1})} \| f \|_{L^2}.
\end{split}
\end{equation*}
If we set $\gamma = 2C$, we can absorb the term in line six on the right side into the left side, and then multiply through by $2$. This establishes \eqref{commute Laplacian}.
\begin{proof}[Proof of the Theorem]

Let $\tilde{R}_0 > 3$ be large enough so that $\supp V \subseteq B(0, \tilde{R}_0/4)$. Pick $ x_0 \in \mathbb{R}^n$ with $1/2 < |x_0| < 3/4$, which implies
\begin{equation*}
\supp V_0(\cdot + x_0) \subseteq B(0, \tilde{R}_0/2).
\end{equation*}
We shift coordinates, apply \eqref{Carleman est} to the operator $P_0 = P_0(h) \defeq -h^2 \Delta + V( \cdot + x_0) -E$ in place of $P$, and then shift back:
\begin{equation*}
\begin{split}
\left\| b(| \cdot - x_0|) e^{\varphi(| \cdot - x_0|)h^{-4/3}} v \right\|^2_{L^2}   &=\left\| b e^{\varphi h^{-4/3}} v ( \cdot + x_0) \right\|^2_{L^2}  \\& \le \frac{C}{h^{10/3}} \left\| me^{\varphi h^{-4/3}}(P_0 - i \varepsilon) v( \cdot + x_0) \right\|^2_{L^2} \\  &+ \frac{C\varepsilon}{h^{10/3}} \left\| e^{\varphi h^{-4/3}} v( \cdot + x_0) \right\|^2_{L^2}
\\&=  \frac{C}{h^{10/3}} \left\| m(| \cdot - x_0|)e^{\varphi(| \cdot - x_0|) h^{-4/3}}(P - i \varepsilon) v \right\|^2_{L^2}
\\&+ \frac{C\varepsilon}{h^{10/3}} \left\| e^{\varphi(| \cdot - x_0|) h^{-4/3}} v \right\|^2_{L^2}, \qquad h \in (0, h_0].
\end{split}
\end{equation*}
Summarizing in a single inequality, we have 
\begin{equation} \label{shifted estimate}
\begin{split}
\left\| b(|\cdot - x_0 |) e^{\varphi(| \cdot - x_0|) h^{-4/3}} v \right\|_{L^2} &\le  \frac{C}{h^{10/3}} \left\| m(| \cdot - x_0|)e^{\varphi(| \cdot - x_0|) h^{-4/3}}(P - i \varepsilon) v \right\|_{L^2} \\
&+ \frac{C\varepsilon}{h^{10/3}} \left\| e^{\varphi(| \cdot - x_0|) h^{-4/3}} v \right\|_{L^2}, \qquad h \in (0, h_0].
\end{split}
\end{equation}

Set $C_\varphi = C_\varphi(h) \defeq 2 \max \varphi$. Recall that by \eqref{varphi h bound},
\begin{equation} \label{varphi h bound reminder}
1 \le C_\varphi \le K \log(h^{-1}),
\end{equation}
 for $K >0$ depending on $\tilde{R}_0$, $\|V\|_\infty$, and $E_{\text{min}}$, but not on $h$.   Multiply \eqref{Carleman est} and \eqref{shifted estimate} through by $e^{-C_\varphi h^{-4/3}}$  to obtain for $h \in (0, h_0]$,
\begin{equation} \label{shift}
e^{-C_\varphi h^{-4/3}} \|b v \|^2_{L^2}  \le \frac{C}{h^{10/3}} \|m(P - i \varepsilon)v \|^2_{L^2} + \frac{C \varepsilon}{h^{10/3}} \|v\|_{L^2}^2, 
\end{equation}
\begin{equation} \label{no shift}
e^{-C_\varphi h^{-4/3}} \|b(| \cdot - x_0|) v \|^2_{L^2}  \le \frac{C}{h^{10/3}} \|m(|\cdot - x_0|)(P - i \varepsilon )v\|_{L^2}^2 + \frac{C \varepsilon}{h^{10/3}} \|v\|_{L^2}^2. 
\end{equation}
It is straightforward to show that 
\begin{equation} \label{remove shift}
4^{-1}m^{-2} \le b^2 + b^2(|\cdot - x_0|), \qquad  m^2 + m^2(|\cdot - x_0|)) \le 17m^2,
\end{equation}
 We add \eqref{no shift} and \eqref{shift} and apply \eqref{remove shift} to arrive at
\begin{equation*} 
e^{-C_\varphi h^{-4/3}}\|m^{-1}  v \|_{L^2}^2 \le \frac{C}{h^{10/3}} \|m(P - i \varepsilon)v \|_{L^2}^2 + \frac{C \varepsilon}{h^{10/3}} \|v\|_{L^2}^2.
\end{equation*}

For any $ \eta >0$, 
\begin{equation*}
\begin{split}
2\varepsilon \| v \|^2_{L^2} &= -2 \imag\langle (P - i\varepsilon)v, v \rangle_{L^2} 
\\& \le  \eta^{-1}\|m(P- i \varepsilon)v \|^2_{L^2} 
+ \eta\|m^{-1} v\|^2_{L^2}.  
\end{split}
\end{equation*}
Setting $\eta = h^{10/3} (2C)^{-1} e^{-C_\varphi h^{-4/3}} $ and applying \eqref{varphi h bound reminder}, we estimate $\varepsilon \| v \|^2_{L^2}$  from above and find that
\begin{equation} \label{penult}
 \|m^{-1} v \|_{L^2}^2 \le e^{C h^{-4/3}\log(h^{-1})} \|m(P - i \varepsilon)v \|_{L^2}^2, \qquad h \in (0,h_0].
\end{equation}

The final task is to use \eqref{penult} to show that for any $f \in L^2$,
\begin{equation} \label{ult}
\|m^{-1}(P - i\varepsilon)^{-1} m^{-1} f \|^2_{L^2}  \le e^{C h^{-4/3}\log(h^{-1})} \|f \|_{L^2}^2, \qquad h \in (0,h_0].
\end{equation}
from which \eqref{exp bound} follows. To establish \eqref{ult}, we prove a simple Sobolev space estimate  and then apply a density argument which relies on \eqref{penult}. 

In what follows, we use $a \lesssim b$ to denote $a \le C_{\varepsilon,h}b$ for $C_{\varepsilon,h}$ depending on $\varepsilon$ and $h$, but not on $v \in H^2$. The commutator $[P,m] = -h^2 \Delta m + 2 h^2 \nabla m \cdot \nabla : H^2 \to L^2$ is bounded. So for $v \in H^2$ such that $mv \in H^2$, we have

\begin{equation*}
 \begin{split}
\|m(P - i \varepsilon)v\|_{L^2} & \le \|(P - i \varepsilon)m v \|_{L^2} +  \|[P,m]v \|_{L^2}
\\& \lesssim \| mv \|_{H^2} + \|v \|_{H^2}
\\& \lesssim  \| mv \|_{H^2}.
\end{split} 
\end{equation*}
Thus we have shown 
\begin{equation} \label{Ceph}
\|m(P-i\varepsilon)v\|_{L^2} \le C_{\varepsilon,h} \|mv\|_{H^2}, \qquad \text{$v \in H^2$ such that $mv \in H^2$}.
\end{equation}

For fixed $f \in L^2$, the function $m(P-i\varepsilon)^{-1}m^{-1} f \in H^2$ because 
\begin{equation*}
\begin{split}
m(P-i\varepsilon)^{-1}m^{-1} f &= (P - i\varepsilon)^{-1} f + [m, (P-i\varepsilon)^{-1}] m^{-1}f  
\\& =  (P - i\varepsilon)^{-1} f + (P -i\varepsilon)^{-1} [P,m]  (P -i\varepsilon)^{-1} m^{-1}f.
\end{split}
\end{equation*}
Now, choose a sequence $v_k \in C_0^\infty$ such that $ v_k \to  m(P-i\varepsilon)^{-1}m^{-1} f$ in $H^2$. Define $\tilde{v}_k \defeq m^{-1}v_k$. Then, as $k \to \infty$
\begin{equation*}
\| m^{-1} \tilde{v}_k - m^{-1} (P- i \varepsilon)^{-1}m^{-1}f \|_{L^2} \le \| v_k - m (P- i \varepsilon)^{-1}m^{-1}f \|_{H^2} \to 0.
\end{equation*}
Also, applying \eqref{Ceph}
\begin{equation*}
\|m(P- i \varepsilon)\tilde v_k - f\|_{L^2} \lesssim \|v_k - m (P- i \varepsilon)^{-1} m^{-1} f \|_{H^2} \to 0.
\end{equation*} 
We then achieve \eqref{ult} by replacing $v$ by $\tilde{v}_k$ in \eqref{penult} and sending $k \to \infty$.\\
\end{proof}

\end{document}